\newcommand{\half}{\mbox{$\frac12$}}
\renewcommand{\Re}{\operatorname{Re}}
\renewcommand{\Im}{\operatorname{Im}}
\DeclareMathOperator{\var}{\normalfont{\text{Var}}}
\newcommand{\dto}{\Rightarrow}
\newcommand{\Xbb}{\widetilde{X}_{n}}
\newcommand{\Xcc}{\widetilde{\mathbf{X}}_{n}}
\newcommand{\scale}{w}
\newcommand{\driftX}{\mu} 
\newcommand{\driftGauss}{\mu}
\newcommand{\varianceGauss}{\sigma^2}
\newcommand{\Dn}{\Delta_n}
\newcommand{\pbb}{\pi}
\newcommand{\Cf}{C}
\title*{On extension of the Markov chain approximation method for computing Feynman--Kac type expectations}
\author{Vincent Liang and Konstantin Borovkov}
\institute{Vincent Liang \at School of Mathematics and Statistics, The University of Melbourne, Parkville 3010, Australia \email{vliang@student.unimelb.edu.au}
\and Konstantin Borovkov \at School of Mathematics and Statistics, The University of Melbourne, Parkville 3010, Australia \email{borovkov@unimelb.edu.au}}
\begin{document}
\maketitle

\abstract{
An efficient discrete time and space Markov chain approximation employing a Brownian bridge correction for computing curvilinear boundary crossing probabilities for general diffusion processes was recently proposed in Liang and Borovkov (2021). One of the advantages of that method over alternative approaches is that it can be readily extended to computing expectations of path-dependent functionals over the  event of the process trajectory staying between two curvilinear  boundaries. In the present paper, we extend the scheme to compute expectations of the Feynman--Kac type that frequently appear in option pricing. To illustrate our approximation scheme, we apply it in three special cases. For sufficiently smooth integrands, numerical experiments suggest that the proposed approximation converges at the rate $O(n^{-2})$, where $n$ is the number of steps on the uniform time grid used.
}

\section{Introduction}

Barrier options are widely used path-dependent derivative securities, and there is extensive literature on their valuation (see e.g.~\cite{Armstrong2001}, \cite{Carr1995} and the references therein). Most of the literature is focused on the case where the barriers are ``flat''. However, curved boundaries also appear in the context of pricing options on underlying assets that have a volatility term structure. Using a deterministic time-change, the time dependence in the volatility can be transferred into the boundary, reducing the problem of pricing call and put options in the Black--Scholes type setting with deterministic interest rates to computing the curvilinear boundary crossing probability of a diffusion process (see e.g.~Section 3 in \cite{Borovkov2005}). Methods for solving the latter problem are numerous, and we refer the interested reader to \cite{Liang2021} for a literature review. To extend the aforementioned problem to the case of stochastic interest rates, one needs to introduce a numeraire process.

It is well-known that, under the no-arbitrage assumption, the fair price of a replicable derivative on an underlying asset $X$ with maturity $T$ and general payoff $\phi(X(T))$ is equal to $\mathbf{E}\phi(X(T))/N(T)$, where $\mathbf{E}$ denotes the expectation with respect to the martingale measure associated with the numeraire process $N$ (for details, see e.g. Chapter 6 in \cite{Bingham2004}). If we introduce a barrier feature for the above option using two time-dependent barriers $g^-<g^+$, the payoff of the new option will be given by
	\begin{equation*}
		\phi(X(T))\mathbf{1}\{g^-(s)<X(s) <g^+(s),\,s \in [0,T]\}.
	\end{equation*}
Hence the fair price of the barrier option is equal to
\begin{equation}\label{expect}
	\mathbf{E}\bigl[\phi(X(T))/N(T);g^-(s)< X(s) <g^+(s),\,s \in [0,T]\bigr].
\end{equation}

The purpose of this paper is to extend the Markov chain approximation method proposed in~\cite{Liang2021} to compute the above expectation in the case when the numeraire process takes the form $N(T) = \exp\{\int_0^T V(X(t))\,dt\}$ for some continuous function~$V$, i.e.~to evaluate
\begin{equation}\label{eqn:feynmann_kac}
	\mathbf{E}\Bigl[e^{-\int_0^TV(X(u))\,du}\phi(X(T));g^-(s) < X(s) <g^+(s),\,s \in [0,T]\Bigr].	
\end{equation}
The method suggested in~\cite{Liang2021} was a development of the approach from~\cite{FuWu2010} for computing boundary non-crossing probabilities of the Brownian motion process. That approach suggested to replace the continuous dynamics of the underlying Brownian motion process  with those of a denumerable    discrete time Markov chain, by discretising both time and space using uniform grids. The transition probabilities for these Markov chains were specified to be proportional to the values of the Brownian motion transition densities for the respective time and space increments. The desired approximation for the boundary non-crossing probability was then computed  in~\cite{FuWu2010} by multiplying finite-dimensional substochastic matrices obtained by retaining the entries corresponding to the space nodes located between the given boundaries at the respective times. 

Our modification of this method developed in~\cite{Liang2021} dramatically improved the convergence rate of the procedure by shifting and adjusting the uniform space grids so that they would have nodes lying exactly on the boundaries, and by implementing one-step Brownian bridge corrections taking into account the possibility that the process could cross a boundary at a time between the points on the discrete time grid. At the same time, the method was extended in~\cite{Liang2021}  to a much broader class of diffusions.

Roughly speaking, one can think about the workings of this method as summing up  the probabilities of all the Markov chain trajectories lying between the boundaries (and applying appropriate corrections). As, for each trajectory, its probability is given by the product of one-step transition probabilities, one basically ``tracks" each trajectory of the chain  in this computational process. Unlike alternative approaches to computing boundary crossing probabilities, such as the method of integral equations (see e.g.~\cite{GuRiRoTo1997} and further references therein), this provides one with an opportunity to compute not only probabilities but also expectations of path-dependent functionals, finding the values of expressions of the form~\eqref{eqn:feynmann_kac} being of particular convenience.

Turning back to applications related to~\eqref{expect}, we note that when the underlying~$X$ is the spot interest rate, this includes the case when the numeraire is the bank account process, i.e. $V(x) = x$. This expression can also be used to price what we call a \textit{hybrid step-barrier option}, an option that has a step option feature (see \cite{Linetsky1999}) and a barrier above the step level (see Example~3 below).

It was reported in~\cite{Liang2021} that numerical experiments had showed that the proposed  method  produces approximations for boundary crossing probabilities with convergence rate $O(n^{-2})$, where~$n$ is the number of nodes on the regular time discretisation grid. It turns out that this convergence rate is preserved in the new extension of the method presented in this paper when~$V$ is sufficiently smooth. Note that proving the above-mentioned very fast convergence rate for our scheme is a very difficult task already in the basic case of computing boundary crossing probabilities. Establishing this rate in the one-sided boundary case with $g^+\in C^2$ for the Brownian motion process is work in progress.

The paper is organised as follows. In Section~\ref{section:method} we describe our approximation procedure and demonstrate its convergence.  In Section~\ref{section:numerical} we present the results of applying it in three special cases.

\section{The method}\label{section:method}

We will begin by specifying the diffusion process model for the underlying asset~$X$. We assume a unit diffusion coefficient since we can transform a large class of diffusion processes to this case using the well-known unit-diffusion transform (see e.g.~Section 2 in \cite{Liang2021}). Then, we will describe the sequence of Markov chain approximations and prove that the sequence converges weakly to the desired diffusion process. Using the weak convergence result, we will show that the corresponding approximations to the option price that are using the Brownian bridge correction techniques converge as well. Recall that employing this correction dramatically improves convergence rates in the basic problem of computing boundary crossing probabilities \cite{Liang2021}. As we will see this applies in the present paper setup as well.

Without loss of generality, we set the terminal time $T := 1$ since a deterministic time change can be applied to get the results for a general fixed $T <\infty$.

\subsection{The setup}

Suppose that the underlying asset's price is modelled by the one-dimensional diffusion process
    \begin{equation*}
        X(t) = x_0 + \int_0^t \driftX(s,X(s))\,ds +W(t),\quad t\geq0,
    \end{equation*}
where $\{W(t)\}_{t\geq 0}$ is a standard Brownian motion process, and $x_0$ is non-random. Assume the following condition is satisfied:
\begin{enumerate}
    \item [(C)] For any fixed $x \in \mathbb{R}$, one has $\driftX(\,\cdot\,,x) \in C^1([0,1])$, and for any fixed $t \in [0,1]$, one has $\driftX(t,\,\cdot\,) \in C^2(\mathbb{R})$. Moreover, for any $r>0$, there exists a $K_r <\infty$ such that one has
        \begin{equation*}
            \lvert \driftX(t,x)\rvert + \lvert \partial_t \driftX(t,x)\rvert + \lvert \partial_{x} \driftX(t,x) \rvert + \lvert \partial_{xx} \driftX(t,x) \rvert \leq K_r , \quad t\in[0,1], \,\lvert x \rvert \leq r.
        \end{equation*}
\end{enumerate}
We will also need some notations and conditions related to the boundaries $g^{\pm}$. Denote by $\Cf=C([0,1])$ the space of continuous functions $f : [0,1]\to\mathbb{R}$ equipped with the uniform norm $\| f\|_{\infty} := \sup_{t\in[0,1]}\lvert f(t)\rvert$. For a fixed $x_0 \in \mathbb{R}$, consider the class
    \begin{equation*}
        \mathcal{G}:= \Bigl\{ (f^-,f^+): f^{\pm}\in \Cf,\,f^-(0) < x_0 < f^+(0),\, \min_{0\leq t \leq 1}(f^+(t)-f^-(t)) > 0 \Bigr\}
    \end{equation*}
of pairs of functions from $\Cf$ and introduce the notation
    \begin{equation*}
        S(f^-,f^+):= \{v \in \Cf:f^-(t) < v(t) <f^+(t),\,t\in [0,1] \},\quad (f^-,f^+) \in \mathcal{G} 
    \end{equation*}
for the  ``bunch'' of continuous functions whose graphs are entirely contained in the strip between the boundaries~$f^\pm.$
    
Let $V : \mathbb{R} \to \mathbb{C}$ be a (possibly complex-valued) continuous function and $\phi: \mathbb{R}\to \mathbb{R}$ be some continuous function. The problem we deal with in this paper is how to compute
    \begin{equation}\label{defn:P}
		Q := \mathbf{E}\Bigl( e^{-\int_0^1V(X(s))\,ds} \phi(X(1)) ; X \in G\Bigr),\quad G:= S(g^-,g^+),
    \end{equation}
for some $(g^-,g^+) \in \mathcal{G}$. To explain our approach, set  $\mathbf{E}^{s,x} (\cdot):=\mathbf{E} (\cdot \,|\,X(s)=x) $ and introduce Feynman--Kac type time-dependent semigroup $\{T_{s,t}\}_{0\leq s< t \leq 1}$ defined by
\begin{equation}\label{T}
	T_{s,t}f(x) = \mathbf{E}^{s,x}\Bigl( e^{-\int_s^t V(X(u))\,du}f(X(t)) ; g^{-}(u) < X(u) < g^+(u),u\in[s,t]\Bigr)
\end{equation}
and represent \eqref{defn:P} as
\begin{equation}\label{Q}
	Q = (T_{0,t_{n,1}}T_{t_{n,2},t_{n,3}}\cdots T_{t_{n,n-1},t_{n,n}}\phi)(x_0),
\end{equation}
for
\begin{equation*}
	t_{n,k}:=k/n,\quad k=0,1,\ldots,n,
\end{equation*}
the uniform partition of $[0,1]$ of rank $\Delta_n:=1/n$, $n\geq 1$. The idea of our method is to approximate operators~$T_{s,t}$ with their discrete versions. We will implement it in the next section, leading to approximation \eqref{defn:P_n} to our~$Q$.

\subsection{Markov chain approximation}

To specify our time-dependent Markov chain approximation, we first define the space grids $E_{n,k}$ and the transition probabilities $p_{n,k}$, and then introduce the corrective terms $\pi_{n,k}$ and $e_{n,k}$ which account for boundary correction and the presence of the term with $V$ respectively.

The grids $E_{n,k}$ are constructed as follows. Set $g_{n,k}^{\pm} := g^{\pm}(t_{n,k})$, $k=1,\ldots,n,$ and, for fixed $\delta\in (0,\half]$ and $\gamma>0$, put
    \begin{equation*}
        \scale_{n,k}   :=
            \begin{cases}
                \frac{(g_{n,k}^+ -g_{n,k}^-)/\Dn ^{1/2 +\delta} }{\lfloor\gamma(g_{n,k}^+ -g_{n,k}^-)/\Dn ^{1/2 +\delta} \rfloor },& 1 \leq k<n,\\
                \frac{(g^+(1) -g^-(1))/\Dn   }{\lfloor\gamma(g^+(1) -g^-(1))/\Dn   \rfloor},& k=n,
            \end{cases}
    \end{equation*}
assuming that~$n$ is large enough such that the integer parts in all the denominators are positive. We set the time-dependent space lattice step sizes to be
    \begin{equation*}\label{eqn:h}
        h_{n,k} :=
        \begin{cases}
            \scale_{n,k}\Dn ^{1/2 +\delta},& 1 \leq k < n, \\
            \scale_{n,n}\Dn  , & k =n.
        \end{cases}
    \end{equation*}
Next, we define
    \begin{equation*}
    \label{grid_E}
        E_{n,k}:= \{ g_{n,k}^+ - j h_{n,k} : j \in \mathbb{Z}\},\quad k = 1,\ldots,n.
    \end{equation*}
We also put $E_{n,0}:=\{x_0\}$ and define the corresponding boundary restricted lattices
    \begin{equation*}
        E_{n,k}^G:= \{ x \in E_{n,k}: g_{n,k}^- < x <g_{n,k}^+ \},\quad k = 1,\ldots,n.
    \end{equation*}
Further, for $k=1,\ldots,n,$ we introduce the discrete time drift and diffusion coefficients
\begin{equation}\label{eqn:diffusion_coef}
    \begin{aligned}
        \driftGauss_{n,k}(x) &:= \big(\driftX + \half\Dn  (\partial_t \driftX+\driftX\partial_x\driftX + \half\partial_{xx}\driftX)\big)(t_{n,k-1},x)\Dn,\\
        \varianceGauss_{n,k}(x) &:= ( 1 + \half\Dn   \partial_x \driftX  (t_{n,k-1},x))^2\Dn,
    \end{aligned}
   \end{equation}
and define one-step ``pseudo-transition probabilities'' (we use this expression because they do not sum to one, although the normalising constants converge to~1 so quickly as~$n\to\infty$ that it makes no sense to normalise them, see Remark 7 in \cite{Liang2021}) as
    \begin{equation*}
       p_{n,k}(x,y) :=
        \varphi(y \,|\,  x + \driftGauss_{n,k}(x),\varianceGauss_{n,k}(x))h_{n,k},\quad (x,y) \in E_{n,k-1}\times E_{n,k},
    \end{equation*}
where $\varphi(x\,|\,\mu,\sigma^2):= (2\pi \sigma^2)^{-1/2}e^{-(x -\mu)^2/(2\sigma^2) }$, $x,\mu\in\mathbb R$, $\sigma^2>0$. We will implement the Brownian bridge correction for two-sided boundary crossing probabilities by using the factor
 \begin{equation*}
        \pbb_{n,k}(x,y) =  1 - \mbox{$\exp\{ \frac{-2}{\Dn  }(g_{n,k-1}^+ - x)(g_{n,k}^+ -y) \}$}- \mbox{$\exp\{ \frac{-2}{\Dn }( g_{n,k-1}^- - x)(g_{n,k}^- -y) \}$}.
    \end{equation*}
Note that we ignore here the highly unlikely event that the trajectory of the Brownian bridge process hits both the upper and lower boundaries in the small time interval $[t_{n,k-1},t_{n,k}]$, see \cite{Liang2021}. The following expression will be used as a trapezoidal approximation of the exponential term $\exp\{-\int_{(k-1)/n}^{k/n}V(X(s))\,ds \}$:
    \begin{equation}\label{defn:potential_correction}
        e_{n,k}(x,y):= \exp\{\mbox{$\frac{-\Dn}{2}$}(V(x) +V(y) )\}.
    \end{equation}
Now we introduce the pseudo-transition matrices $S_{n,k} \in \mathbb{R}^{\lvert E_{n,k-1}^G\rvert\times \lvert E_{n,k}^G\rvert}$ involving both the boundary crossing correction terms $\pi_{n,k}$ and our adjustments $e_{n,k}$ for the presence of the exponential terms with $V$:
\begin{equation*}
    S_{n,k} := \big[p_{n,k}(x,y)\pi_{n,k}(x,y)e_{n,k}(x,y)\big]_{(x,y)\in E_{n,k-1}^G\times E_{n,k}^G}.
\end{equation*}
Denoting by $\phi_n := [\phi(x)]_{x \in E_{n,n}^G}$ the $n$-th payoff column vector, our approximation to~\eqref{defn:P} is given by the sequence of matrix products
\begin{equation}\label{defn:P_n}
    Q_{n} := S_{n,1}S_{n,2}\cdots  S_{n,n}\phi_n,\quad n \geq 1,
\end{equation}
which are discrete analogues of~\eqref{Q}. 

\subsection{Weak convergence}

\begin{theorem}
Let condition \emph{(C)} be met, $(g^-,g^+)\in\mathcal{G}$, and $V : \mathbb{R} \to \mathbb{C}$, $\phi : \mathbb{R} \to \mathbb{R}$ be continuous functions. Then $Q_n\to Q$ as $n \to\infty$.
\end{theorem}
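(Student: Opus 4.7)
The plan is to realise $Q_n$ as the expectation of a functional of a continuous-time interpolation of the underlying Markov chain, and then conclude from weak convergence of these interpolations to $X$ together with a continuous-mapping argument. First I would introduce the inhomogeneous Markov chain $\xi_n = (\xi_{n,k})_{k=0}^n$ with $\xi_{n,0} = x_0$ and transition density $\varphi(\,\cdot\,|\,x + \driftGauss_{n,k}(x),\varianceGauss_{n,k}(x))$ at step $k$, and let $\Xbb \in \Cf$ be the process obtained from $\xi_n$ by pasting independent Brownian bridges on the sub-intervals $[t_{n,k-1},t_{n,k}]$. Under condition~(C), the Milstein-type corrections in~(\ref{eqn:diffusion_coef}) are chosen precisely so that the one-step mean and variance of $\xi_n$ agree with those of $X$ up to $o(\Dn)$; a standard invariance-principle argument (essentially the one carried out in~\cite{Liang2021}) then yields $\Xbb \Rightarrow X$ in $\Cf$.

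The next step is to rewrite the matrix product in~(\ref{defn:P_n}) as a path-space expectation. For any fixed endpoints $x,y$ at times $t_{n,k-1},t_{n,k}$, the probability that the intervening Brownian bridge stays strictly between the straight segments connecting $g^\pm_{n,k-1}$ to $g^\pm_{n,k}$ equals $\pbb_{n,k}(x,y)$ up to a correction that is super-polynomially small in $\Dn$ (namely, the event of hitting both boundaries). Recognising $h_{n,k}$ as the step size of a Riemann sum on $E_{n,k}^G$ and using condition~(C) together with smoothness of $\pbb_{n,k}$ and $e_{n,k}$ to bound the quadrature error, I would establish
\[
Q_n = \mathbf{E}\Bigl[\phi(\Xbb(1))\prod_{k=1}^n e_{n,k}(\Xbb(t_{n,k-1}),\Xbb(t_{n,k}))\,\mathbf{1}\{\Xbb\in G_n\}\Bigr] + o(1),
\]
where $G_n$ is the strip between the piecewise linear interpolations of $g^\pm$ through the grid nodes $t_{n,k}$.

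It remains to pass to the limit. On the event $\{\Xbb\in G_n\}$ the paths lie in a bounded region, so $\phi(\Xbb(1))$ is bounded; by uniform continuity of $V$ on compacts, the trapezoidal product $\prod_k e_{n,k}$ differs from $\exp\{-\int_0^1 V(\Xbb(s))\,ds\}$ by $o(1)$ uniformly in $n$. Moreover, since $g^\pm$ are continuous, $\mathbf{1}\{\,\cdot\,\in G_n\}\to\mathbf{1}\{\,\cdot\,\in G\}$ at every $f\notin\partial G$, and the functional $f\mapsto\phi(f(1))\exp\{-\int_0^1 V(f(s))\,ds\}\mathbf{1}\{f\in G\}$ is almost-everywhere continuous under the law of $X$, because $\mathbf{P}(X(t)=g^{\pm}(t)\text{ for some }t\in[0,1])=0$ for a diffusion with unit diffusion coefficient and continuous boundaries. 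The continuous mapping theorem, combined with the uniform boundedness noted above, then yields $Q_n\to Q$.

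The main technical obstacle is the second paragraph: one must simultaneously control the Riemann-sum error on a $k$-dependent grid and the cumulative effect of the fact (noted in Remark~7 of~\cite{Liang2021}) that $p_{n,k}(x,\cdot)$ does not exactly sum to one on $E_{n,k}$. The per-step normalising defect has to be shown to decay faster than $\Dn$ so that its accumulation over $n$ steps remains negligible; this follows from the Gaussian tails of $\varphi$ uniformly in the bounded strip, but making it explicit and combining it cleanly with the quadrature error is the delicate part of the argument. Beyond this, the weak convergence $\Xbb\Rightarrow X$ and the closing continuous-mapping step largely parallel the analysis of boundary crossing probabilities in~\cite{Liang2021}.
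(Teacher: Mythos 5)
Your overall strategy coincides with the paper's: interpolate the approximating chain with Brownian bridges, invoke the weak convergence $\Xbb\dto X$ from~\cite{Liang2021}, and close with a continuous-mapping/Portmanteau argument using that $X$ does not charge $\partial G$. The differences are in two intermediate steps. First, you take the chain to have the continuous Gaussian transition \emph{density} and then propose to recover the lattice sum $Q_n$ by a quadrature (Riemann-sum) estimate on $E_{n,k}^G$, together with control of the accumulated normalising defect; the paper avoids this entirely by defining the chain directly on the lattice with the normalised transition probabilities $p_{n,k}(x,y)/C_{n,k}(x)$, so that the matrix product is (up to the already-analysed $\pi_{n,k}$ and $C_{n,k}\to1$ corrections from~\cite{Liang2021}) exactly an expectation over the chain, and no per-step quadrature analysis is needed. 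Your route is workable but, as you note yourself, it is the delicate part of your argument, and it is extra work the paper does not have to do. Second, for the term $\exp\{-\int_0^1V\}$ you argue pathwise: on the bounded strip the trapezoidal sum converges to $\int_0^1 V(\Xbb(s))\,ds$ uniformly over a tight family, so the functional converges along converging paths. The paper instead introduces the bivariate process $(\Xbb,Z_n)$ with $Z_n$ the trapezoidal sum, verifies the infinitesimal mean and variance conditions, and deduces $(\Xbb,Z_n)\dto(X,Z)$ from Corollary~4.2 of~\cite{EthierKurtz}. Both treatments are legitimate; yours is more elementary, the paper's fits the generator framework and would extend more readily to functionals that are not plain Riemann sums.

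One point needs correcting: you justify the almost-everywhere continuity of the indicator functional by asserting $\mathbf{P}(X(t)=g^{\pm}(t)\text{ for some }t\in[0,1])=0$. That probability is in general strictly positive (any path that crosses a boundary touches it). What is true, and what is actually needed, is $\mathbf{P}(X\in\partial G)=0$, i.e.\ that almost no path touches a boundary without crossing it; this is exactly what Corollary~2 of~\cite{Liang2021}, cited in the paper's proof, provides. The conclusion of your continuous-mapping step is therefore correct, but the stated reason is not.
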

\begin{proof}
We will use the method of weak convergence. First, we normalise the pseudo-transition probabilities $p_{n,k}$ and prove that the corresponding sequence of Markov chains converges weakly to the target diffusion process $X$ using convergence results from \cite{Liang2021}. Then we extend this convergence result to a sequence of auxiliary bivariate processes, where from the desired convergence $Q_n \to Q$ will follow as these quantities can be expressed as expectations of a suitable functional.

Let $\{\xi_{n,k} \}_{k=1}^n$ denote a Markov chain with transition probabilities $p_{n,k}(x,y)/C_{n,k}(x)$, $(x,y) \in E_{n,k-1}\times E_{n,k}$, where $C_{n,k}(x) := \sum_{y \in E_{n,k}}p_{n,k}(x,y)$. Furthermore, let $\Xbb$ denote the Brownian bridge-interpolated version of $\{\xi_{n,k}\}_{k=1}^n$, i.e.
\begin{equation*}
	\Xbb(t) := B_{n,k}^{\xi_{n,k-1},\xi_{n,k}}(t),\quad t\in [t_{n,k-1},t_{n,k}],\quad k=1,\ldots,n,
\end{equation*}
where
\begin{equation*}
	B_{n,k}^{x,y}(t) :=B_{n,k}^{\circ}(t) + x + n(t- t_{n,k-1})(y-x), \quad x ,y\in \mathbb{R},
\end{equation*}
and $B_{n,k}^{\circ}(t)$, $t \in [t_{n,k-1},t_{n,k}]$, are independent Brownian motions “pinned” at the time-space points $(t_{n,k-1}, 0)$ and $(t_{n,k}, 0)$, these bridges being independent of our chain. From Corollary 1 in \cite{Liang2021}, we know that $\Xbb \dto X$ as $n\to\infty$, where ``$\dto$'' denotes weak convergence of random elements in the respective functional space (in this case, in space $(C, \| \cdot \|_{\infty} )$).

Define the two-dimensional process $\mathbf{X}(t) :=(X(t),Z(t))$ by letting
\begin{equation*}
	Z(t) := \int_0^t V(X(s))\,ds, \quad t\in[0,1].
\end{equation*}
Then we can rewrite $Q$ from \eqref{defn:P} as
\begin{equation*}
	Q = \mathbf{E}\big(\Psi(\mathbf{X}(1));X \in G\big),
\end{equation*}
where $\Psi(\mathbf{X}(1)) := \phi(X(1))e^{Z(1)}$. Our approximation \eqref{defn:P_n} can also be rewritten as
\begin{equation*}
	Q_n = \mathbf{E}\bigl(\Psi(\mathbf{X}_n(1));\Xbb \in G\bigr),
\end{equation*}
where $\mathbf{X}_n(t) := (\Xbb(t), Z_{n, \lfloor n t\rfloor })$, $Z_{n,0}=0$ and, for $k=0,1,\ldots, n,$
\begin{equation*}
Z_{n,k} :=  \frac{\Delta_n}{2}\sum_{i=1}^{k} \bigl[V(\Xbb(t_{n,i-1})) +V(\Xbb(t_{n,i}))\bigr] .
\end{equation*}
We will now show that $(\Xbb, Z_{n}) \dto (X,Z)$ in the respective Skorokhod space by verifying the consistency of infinitesimal moments and tightness. We first define the auxiliary process $\widehat{Z}_n(t) := \widehat{Z}_{n,\lfloor n t\rfloor}$, $t\in[0,1]$, with $\widehat{Z}_{n,k} = \sum_{i=0}^{k-1} V(\Xbb(t_{n,i})) \Delta_n$ and show that $(\Xbb,\widehat{Z}_n)\dto (X,Z)$.

Define the joint process $\Xcc(t) := (t,\Xbb(t),\widehat{Z}_{n,\lfloor n t \rfloor })$ and the stopping time
\begin{equation*}
	\tau_n^r:= \inf\bigl\{t \geq 0: \| \Xcc(t)\| \vee \| \Xcc(t-)\| >r\bigr\},
\end{equation*}
$\| \mathbf{u}\| := \lvert u_1 \rvert\vee \lvert u_2\rvert \vee \lvert u_3\rvert$ being the Chebyshev norm of $\mathbf{u}=(u_1,u_2,u_3)\in\mathbb{R}^3$. Note that
\begin{equation*}
	\widehat{Z}_{n,k+1} = \widehat{Z}_{n,k} +  V(\Xbb(t_{n,k}))\Delta_n,\quad \widehat{Z}_{n,0} =0.
\end{equation*}
Let $\Delta\widehat{Z}_{n,k+1} := \widehat{Z}_{n,k+1} -\widehat{Z}_{n,k} $. It follows immediately that, for all $r>0$,
\begin{equation*}
	\lim_{n \to \infty}\max_{1\leq k\leq \lfloor n \tau_n^r\rfloor} \Bigl\lvert \Delta_n^{-1}\mathbf{E}\bigl(\Delta \widehat{Z}_{n,k+1} \,|\,\widehat{Z}_{n,k} = z, \Xbb(t_{n,k})=x\bigr) -V(x)\Bigr\rvert =0
\end{equation*}
and
\begin{equation*}
	\lim_{n \to \infty}\max_{1\leq k\leq \lfloor n \tau_n^r\rfloor}\Delta_n^{-1}\text{Var}\bigl(\Delta \widehat{Z}_{n,k+1} \,|\,\widehat{Z}_{n,k} = z, \Xbb(t_{n,k})=x\bigr) =0.
\end{equation*}
Tightness of the sequence of the distributions of $\Xcc$ in the respective Skorokhod space is immediate since $V$ is bounded on compact sets and hence it follows from Corollary~4.2 in \cite{EthierKurtz} that $(\Xbb,\widehat{Z}_n)\dto (X,Z)$. For all $r>0,$
\begin{equation*}
	\max_{1\leq k\leq \lfloor n \tau_n^r\rfloor}\lvert  \widehat{Z}_{n,k} - Z_{n,k}\rvert \leq \frac{\Delta_n}{2}\lvert V(0) + V(r)\rvert \xrightarrow{\text{a.s.}} 0, \quad n\to\infty,
\end{equation*}
and hence it follows that $(\Xbb,Z_n) \dto (X,Z)$ as well. By the continuous mapping theorem applied to the function $(x,y)\mapsto \phi(x)e^y$, the Portmanteau theorem (see e.g.~p.~24 in~\cite{Billingsley1968}), and Corollary 2 from~\cite{Liang2021}, the claimed result follows from the established weak convergence of the processes and boundedness of the integrands in $Q$. \hfill$\Box$

\end{proof}

\section{Numerical examples}\label{section:numerical}

In this section, we will illustrate the efficiency of our approximation scheme \eqref{defn:P_n}. The run times for these computations using the programming language \texttt{Julia} run on a MacBook Pro 2020 laptop computer with an i5 processor (2~GHz, 16~GB RAM) are basically the same as the ones reported in \cite{Liang2021} for computing boundary-crossing probabilities employing the same hardware and software. To evaluate the partial derivatives in \eqref{eqn:diffusion_coef}, we used the package \texttt{HyperDualNumbers.jl}

We apply our algorithm in three different examples, where $V(x)$ is equal to $-ix^2$, $x$, and $\kappa \mathbf{1}\{x > r\}$ respectively, for some $x,\kappa,r\in \mathbb{R}$. Due to the discontinuity of the function $V$ at $x=r$ in the last case, we modify the scheme slightly for that choice of $V$.

For each example, we compute $Q_n$ and plot the observed convergence rate of $Q_{n+1} - Q_n$ to zero as $n$ grows. Closed-form expressions for $Q$ are not available in any of these examples. Due to the iterative nature of our scheme \eqref{defn:P_n}, as a byproduct, we obtain approximations to $v(t,x) := (T_{t,1}\phi)(x)$, where $T_{s,t}$ appeared in \eqref{T}. We will plot the surfaces $v(t,x)$ for each example as well.

\subsection{The case $V(x) = -i x^2$}

To demonstrate general methods for obtaining distributional properties of Wiener functionals of the form $\int_0^1 V(W(s))\,ds$, the characteristic function of the random variable $\int_0^1 W(s)^2\,ds$ was computed in~\cite{Kac1949} and \cite{CameronMartin1945} yielding
\begin{equation*}
	\mathbf{E}\exp\biggl\{i \lambda \int_0^1W(u)^2\,du \biggr\} = \sqrt{\text{sech}\sqrt{2i\lambda}}, \quad \lambda \in\mathbb{R}.
\end{equation*}
Using our scheme, we compute approximations to the expressions of this form restricted to the event of non-crossing given boundaries:
\begin{equation*}\label{defn:v(t,x)_V1}
	v(t,x) = \mathbf{E}\biggl(\exp \biggl\{i  \int_t^1 W(u)^2\,du \biggr\}; g^-(s) < W(s) < g^+(s),\,s\in[t,1] \,\bigg|\, W(t)=x\biggr),
\end{equation*}
with time-dependent boundaries
\begin{equation*}
	g^{\pm}(t) = \pm 4 \mp t^2,\quad t\in [0,1].
\end{equation*}
In Fig~\ref{fig:complex_potential}, we plot our approximation of the form \eqref{defn:P_n} to $v(t,x)$. We see from Fig~\ref{fig:complex_potential_convergence}, that $\lvert \Re Q_{n+1} - \Re Q_n\rvert $ converges to zero at the rate $O(n^{-3})$, suggesting that $\lvert \Re Q_n - \Re Q \rvert=O(n^{-2})$ as $n\to\infty$.

\begin{figure}[ht]
	\centering
	\includegraphics[scale=0.55]{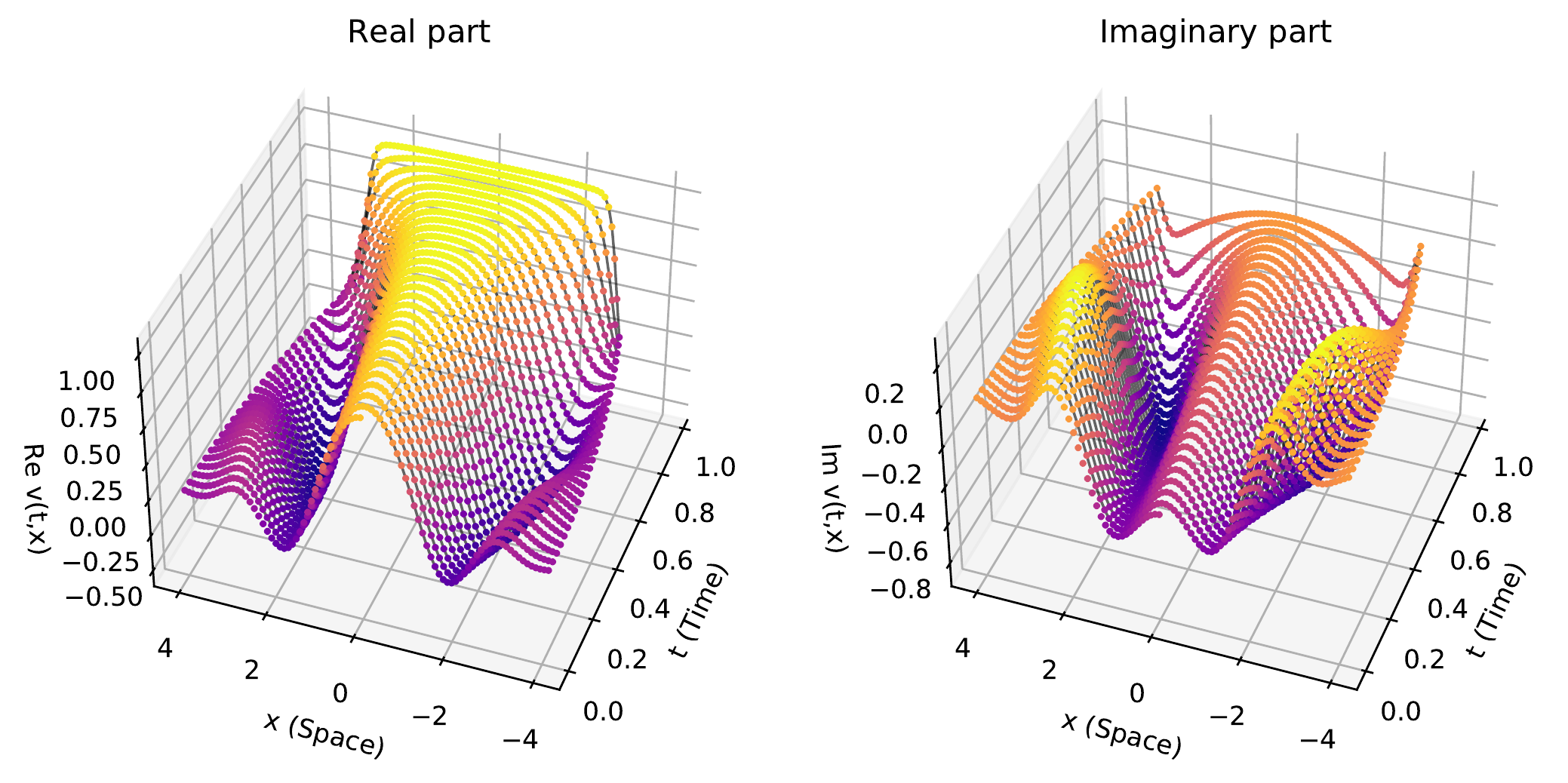}
	\caption{Approximations to $\Re v(t,x)$ and $\Im v(t,x)$ using \eqref{defn:P_n} with parameters: $n=30$, $\gamma =2$, $\delta =0$, $\mu(t,x) =0$, and $V(x) = -ix^2$.}
	\label{fig:complex_potential}
\end{figure}

\begin{figure}[ht]
	\centering
	\includegraphics[scale=0.7]{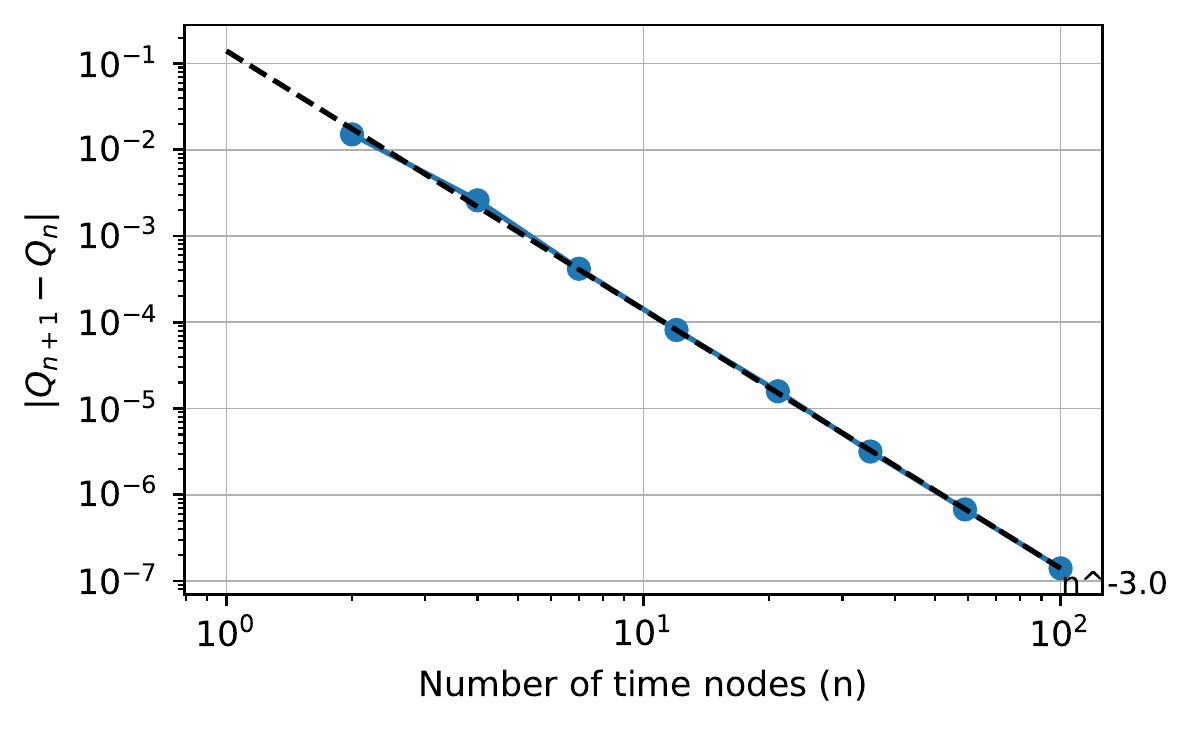}
	\caption{The log-log plot of $\lvert \Re Q_{n+1} - \Re Q_n \rvert$ is displayed by the blue line. The dashed straight line has a slope equal to $-3$, suggesting that $\lvert \Re Q_n - \Re Q\rvert $ is converging at rate $O(n^{-2})$. }
	\label{fig:complex_potential_convergence}
\end{figure}

\subsection{The case $V(x) = x$}

In this example, we price a zero-coupon bond with a two-sided time-dependent knock-out barrier, using a one-factor Hull--White model for spot interest rates. This problem was also explored in \cite{Kuan2003}. For $\alpha,\sigma >0,$ the Hull--White model assumes that the spot interest rate $r$ has the following dynamics:
\begin{equation*}
	dr(t) = (\theta(t) -  \alpha X(t))\,dt + \sigma \,dW(t), \quad t\geq 0,
\end{equation*}
where $\theta(t)$ depends on $\alpha$, $\sigma$ and the currently observed instantaneous forward rate curve $f(t)$ (see e.g.~Proposition 10.1.6 in \cite{Piterbarg2010}):
\begin{equation*}
	\theta(t) = f'(t) + \alpha f(t) + \frac{\sigma^2}{2\alpha}(1-  e^{-2\alpha t}),
\end{equation*}
assuming that $r(0) = f(0)$. To apply our scheme to this example, we scale all the space variables by $\sigma$ to transform $r(t)$ into a unit-diffusion process. Using the martingale pricing theorem, the fair price of a one-year maturity zero-coupon bond  with a barrier option feature is equal to
\begin{equation*}\label{defn:ZCB_price}
\mathbf{E} \bigl(e^{-\int_0^1 r(s)\,ds} ; r \in G\bigr).
\end{equation*}
We used the time-dependent boundaries given by
\begin{equation*}
	g^{\pm}(t) = \pm 0.04(1\mp \mbox{$\frac{1}{2}$}\sin(3t)),\quad t\in [0,1].
\end{equation*}
To demonstrate the performance of our scheme, we set $\alpha = 0.01$, $\sigma = 0.01$ and a flat instantaneous forward curve $f(t) \equiv 0.03$. In the left pane in Fig~\ref{ex2}, we plot our approximation of
\begin{equation}\label{defn:v(t,x)_V2}
	v(t,x) = \mathbf{E}\biggl(\exp \biggl\{- \int_t^1 r(u)\,du \biggr\}; g^-(s) < r(s) < g^+(s),\,s\in[t,1] \,\bigg|\, r(t)=x\biggr),
\end{equation}
for different initial values using our approximation algorithm \eqref{defn:P_n}. We see from the right pane in Fig~\ref{ex2} that $Q_{n+1} -  Q_n $ converges to zero at the rate $O(n^{-3})$, suggesting that $\lvert  Q_n - Q \rvert =O(n^{-2})$ as $n\to\infty$.

\begin{figure}[ht]
\includegraphics[scale=0.5]{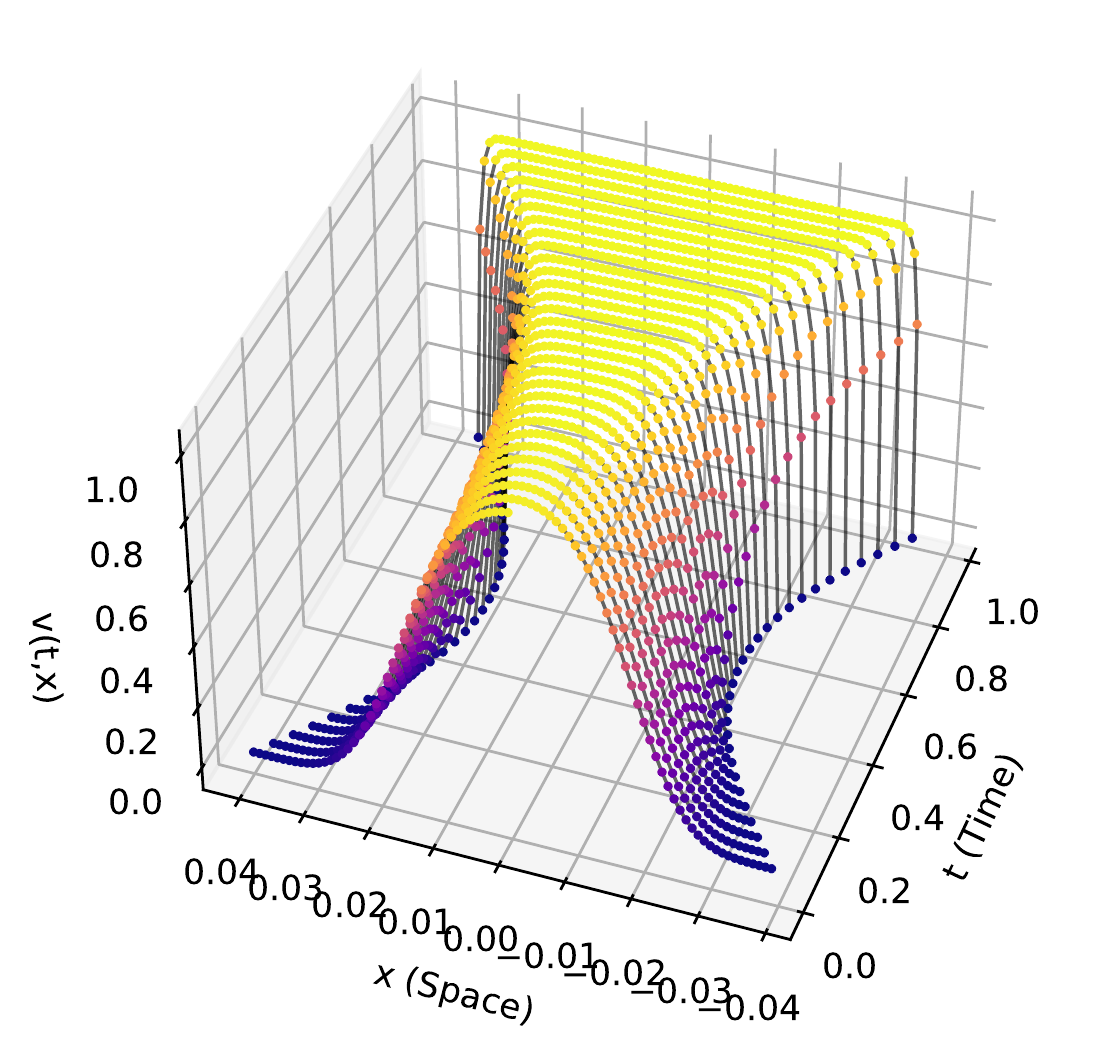}\quad
\includegraphics[scale=0.5]{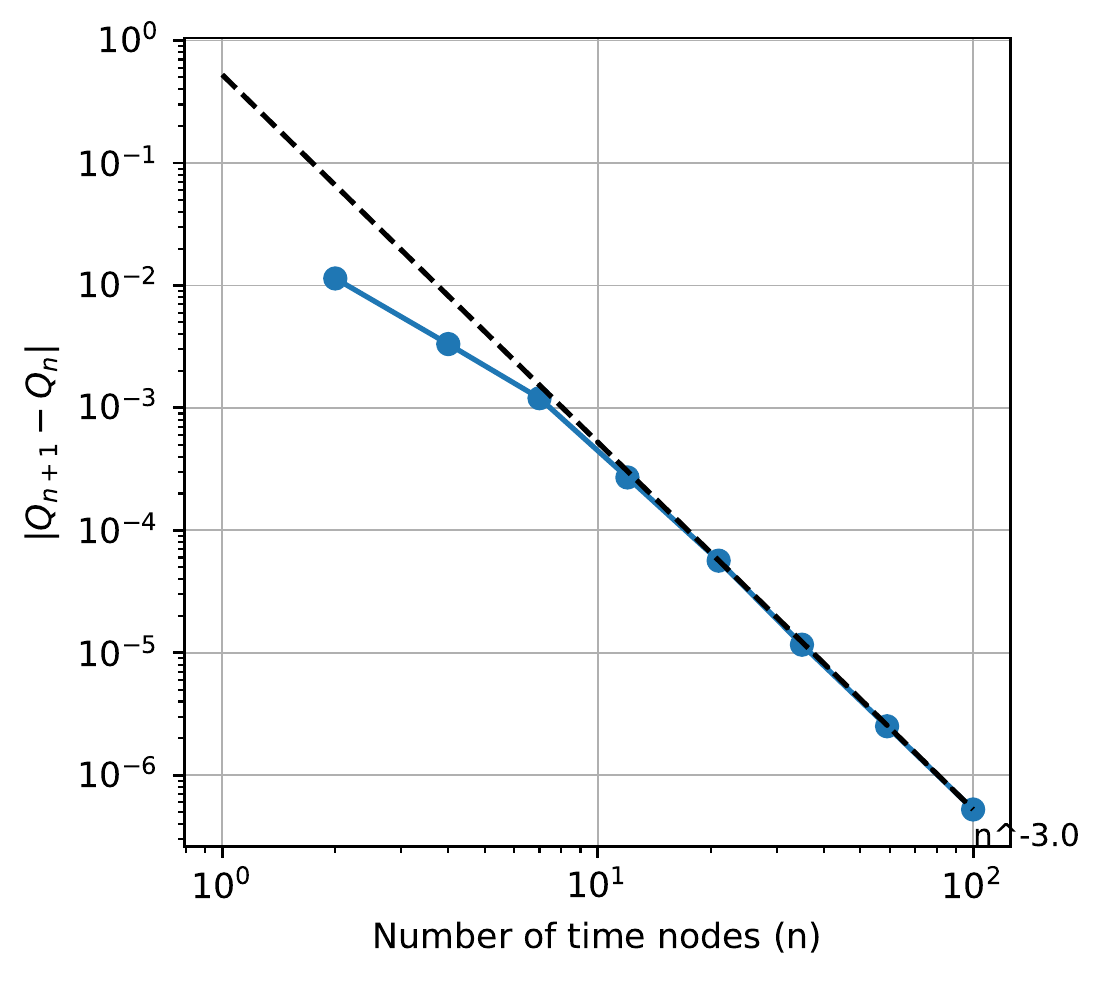}
\caption{The left pane shows an approximation to $v(t,x)$ from~\eqref{defn:v(t,x)_V2} for different initial values $(t,x)$ using \eqref{defn:P_n} with parameters $n=30$, $\gamma =2$, $\delta =0$, $\mu(t,x) =(\theta(t) -\alpha x)$, $\sigma  = 0.01$ and $V(x)=x$. On the right pane, we present a log-log plot of $\lvert Q_{n+1} - Q_n \rvert$ (the blue line). The dashed black straight line has a slope equal to $-3$.}
\label{ex2}
\end{figure}

\subsection{The case $V(x) = \kappa \mathbf{1}\{ x >r \}$}

The standard barrier option is one of the most popular first-generation exotic derivatives, due to its cheaper price compared to their vanilla counterparts. If an investor believes that a certain price is unlikely to fall below a certain level, they can choose to include a knock-out feature in the option, which reduces the price. These cost reductions can be substantial when volatility is high. However, standard barrier options come with a few disadvantages. Option buyers can lose their entire investment due to short time price spikes once the price is near the barrier. The delta sensitivity of such a barrier option is discontinuous near the boundary, making it difficult to hedge for options dealers.

The so-called ``step option'' was introduced in \cite{Linetsky1999} to address these issues. The payoff of an ``up-and-out'' step option with finite knock-out rate $\kappa \geq 0$ written on an option with payoff $\phi$ is given by the formula
\begin{equation*}
    \exp\{-\kappa \tau_r(T)\}\phi(X(T)),
\end{equation*}
where $\kappa>0$ is a constant and  $\tau_r(t)$ is the sojourn time of the underlying price process~$X$ above the level $r >0$ until time $t$:
\begin{equation*}
    \tau_r(t) := \int_0^t \mathbf{1}\{X(s)> r\}\,ds,\quad t\in [0,T].
\end{equation*}
We will assume $X$ to be the standard Brownian motion process $W$ in this example. 

This option can be modified by adding barrier features that will further reduce its price. We    introduce a time-dependent knock-out barrier $g^+$ above the step option barrier $r$ and a lower knock-out barrier $g^-$. We call this option a ``hybrid step-barrier option''. Its  payoff is   given by
\begin{equation*}
    \exp\{-\kappa \tau_r(T)\}\phi(X(T))\mathbf{1}\{ g^-(s) < X(s) < g^+(s),\,s\in [0,T]\}.
\end{equation*}
The expectation of this functional is a special case of our $Q$ from \eqref{defn:P} with $V(x) = \kappa \mathbf{1}\{x > r\}$. In this case, the trapezoidal approximation \eqref{defn:potential_correction} converges very slowly due to the discontinuity of~$V$ at the occupation level~$r$. We now describe a modification of $e_{n,k}$ to address this issue. Let $\mathbf{E}^{t,x,y}(\cdot) := \mathbf{E}(\,\cdot\,|\, W(0)=x, W(t)= y)$. Conditioning $W$ on endpoints $(0,x)$ and $(t,y)$, one can expect that for small times $t$, the sojourn time $\tau_r(t)$ is ``almost independent'' of the boundary crossing event:
\begin{align*}
	&\mathbf{E}^{t,x,y} \bigl[e^{-\kappa \tau_r(t)}\phi(X(t)); g^-(t)< X(s) <g^+(s),\,s\in[0,t]\bigr] \\
	&\qquad \approx \mathbf{E}^{t,x,y}e^{-\kappa \tau_r(t)}\mathbf{E}^{t,x,y}\bigl[\phi(X(t)); g^-(t) < X(s) < g^+(s),\,s\in [0,t]\bigr].
\end{align*}
There is a semi-explicit formula for $\mathbf{E}^{t,x,y}e^{-\kappa \tau_r(t)}$ given in (1.4.7) in \cite{Borodin2002}. However, efficient numerical computations of the double convolution present in that formula was challenging in the case when $x<r$ and $y>r$. Hence we decided to use the following simple approximation.

\begin{lemma}
For $x,y,\kappa \in \mathbb{R}$, as $t \downarrow 0$,
\begin{equation*}
	\mathbf{E}^{t,x,y}e^{-\kappa \tau_r(t)} = 1 - \kappa  \int_0^t \overline{\Phi}\left(\frac{r - x - \frac{s}{t}(y-x) }{\sqrt{(t-s)s/t} }\right)\,ds + O(t^2),
\end{equation*}
where $\Phi(\cdot)$ is the standard normal distribution function and $\overline{\Phi}(x):= 1 - \Phi(x)$.
\end{lemma}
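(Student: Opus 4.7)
The plan is to identify the leading linear-in-$\kappa$ term with an explicit integral and then absorb everything else into the $O(t^2)$ remainder by a simple Taylor expansion argument, exploiting the deterministic bound $0\le \tau_r(t)\le t$.

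First, I would recall that, under $\mathbf{E}^{t,x,y}$, the process $\{W(s)\}_{s\in[0,t]}$ is a Brownian bridge from $(0,x)$ to $(t,y)$. Consequently, for each $s\in(0,t)$, the marginal $W(s)$ is normally distributed with mean $x+\frac{s}{t}(y-x)$ and variance $(t-s)s/t$. By Fubini's theorem,
\begin{equation*}
    \mathbf{E}^{t,x,y}\tau_r(t)
    =\int_0^t \mathbf{P}^{t,x,y}(W(s)>r)\,ds
    =\int_0^t \overline{\Phi}\!\left(\frac{r-x-\frac{s}{t}(y-x)}{\sqrt{(t-s)s/t}}\right)ds,
\end{equation*}
which matches the integral appearing in the claim.

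Next, I would Taylor-expand the exponential: for any $u\ge 0$,
\begin{equation*}
    e^{-\kappa u}=1-\kappa u+R(u),\qquad |R(u)|\le \tfrac{\kappa^2}{2}u^2\,e^{|\kappa|u}.
\end{equation*}
Substituting $u=\tau_r(t)$ and taking $\mathbf{E}^{t,x,y}$, the second term gives exactly $-\kappa\mathbf{E}^{t,x,y}\tau_r(t)$, which is the integral above. Since $0\le \tau_r(t)\le t$ deterministically, the remainder satisfies
\begin{equation*}
    \bigl|\mathbf{E}^{t,x,y}R(\tau_r(t))\bigr|
    \le \tfrac{\kappa^2}{2}\,t^{2}\,e^{|\kappa| t}
    =O(t^2)\qquad\text{as }t\downarrow 0,
\end{equation*}
uniformly in $x,y$. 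Combining the two displays yields the stated asymptotic formula.

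The only mild subtlety is rigorously invoking Fubini to interchange expectation and the time integral in $\tau_r(t)=\int_0^t\mathbf{1}\{W(s)>r\}\,ds$, but this is immediate since the integrand is non-negative and bounded. No genuine obstacle arises: the deterministic bound $\tau_r(t)\le t$ makes the higher-order terms in the Taylor expansion trivially $O(t^2)$, so there is no need for any delicate small-ball estimate on the bridge or careful handling near $s=0$ or $s=t$ (where the argument of $\overline{\Phi}$ blows up but the integrand remains bounded by $1$).
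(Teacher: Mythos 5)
Your proposal is correct and follows essentially the same route as the paper's proof: Taylor-expand $e^{-\kappa u}$ using the deterministic bound $\tau_r(t)\le t$ to control the remainder, then compute $\mathbf{E}^{t,x,y}\tau_r(t)$ via Fubini and the Gaussian marginal of the Brownian bridge. Your version is slightly more explicit about the remainder estimate, but there is no substantive difference.
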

\begin{proof}
Since $ \tau_r(t) \leq t$, a Taylor series expansion of $e^{-x}$ yields
\begin{equation*}
	\mathbf{E}^{t,x,y}e^{-\kappa \tau_r(t)} = 1 - \kappa \mathbf{E}^{t,x,y}\tau_r(t) + O(t^2).
\end{equation*}
Changing the order of integration using Fubini's theorem we get
\begin{equation*}
	\mathbf{E}^{t,x,y}\tau_r(t) = \int_0^t \mathbf{P}^{t,x,y}(W(s) >r)\,ds.
\end{equation*}
Since
\begin{equation*}
	(W(s) \,|\, W(0)=x,W(t)=y) \sim N\left( x + \frac{s}{t}(y-x), \frac{s}{t}(t-s) \right),
\end{equation*}
one has
\begin{equation*}
	\mathbf{E}^{t,x,y}\tau_r(t) = \int_0^t \overline{\Phi}\left(\frac{r - x - (y-x)s/t }{\sqrt{(t-s)s/t}}\right)\,ds.
\end{equation*}
\hfill$\Box$
\end{proof}

\begin{remark}
One could actually compute  the second-order term in the expansion for $\mathbf{E}^{t,x,y}e^{-\kappa \tau_r(t)}$ using the following formula for the second moment of~$\tau_r(t)$:
\begin{equation*}
	\mathbf{E}^{t,x,y}\tau_r(t)^2=\int_0^t\int_0^t\mathbf{P}^{t,x,y}(W(u) >r, W(s) >r)\,du\,ds.
\end{equation*}
Let $W^{t,x,y} := \{W^{t,x,y}(s) : s\in [0,t]\}$ denote the Brownian motion pinned at $(0,x)$ and $(t,y)$. For brevity, we also set $W^{t}:=W^{t,0,0}$. It is well-known that, for $s<t$,
\begin{equation*}
	\mathbf{E}W^{t,x,y}(s)  = x + \frac{s}{t}(y-x), \quad \var\, W^{t,x,y}(s) = \frac{s}{t}(t-s),
\end{equation*}
and hence
\begin{equation*}
	\mathbf{P}^{t,x,y}(W(u) >r,W(s) >r) = \mathbf{P}\left(\frac{W^{t}(u)}{\sqrt{\var\, W^t(u) }}>r_u,\frac{W^{t}(s)}{\sqrt{\var\, W^t(s)}} >r_s\right),
\end{equation*}
where
\begin{equation*}
	r_v:= \frac{r - x - (y-x)v/t}{\sqrt{(t-v)v/t}},\quad v\in[0,t].
\end{equation*}
Now since $\mathbf{E}W^t(s)W^t(u)= (t-u)s/t$, $s<u$, we get
\begin{align}
	\rho :& 
= \text{Cov}\left(\frac{W^t(s)}{\sqrt{\var\, W^t(s)}},\frac{W^t(u)}{\sqrt{\var\, W^t(u)}}\right) 
\notag \\
& = \frac{\mathbf{E}W^t(s)W^t(u)}{\sqrt{\var\, W^t(s) \var\, W^t(u)}}= \sqrt{\frac{s(t-u)}{u(t-s)}}.
\label{rho}
\end{align}
Therefore
\begin{equation*}
	\mathbf{P}^{t,x,y}(W(u) >r,W(s) >r) =  1-\Phi_{\rho}(-r_s,-r_u),
\end{equation*}
where $\Phi_{\rho}$ denotes the bivariate standard normal cumulative distribution function with correlation $\rho$ given by~\eqref{rho}. Hence we have
\begin{equation*}
	\mathbf{E}^{t,x,y}\tau_r(t)^2=\int_0^t\int_0^t (1-\Phi_{\rho}(-r_s,-r_u))\,du\,ds.
\end{equation*}
However, numerically computing this integral turned out to be too computationally expensive and did not improve the efficiency of the algorithm.
\end{remark}

\begin{figure}[ht]
	\includegraphics[scale=0.5]{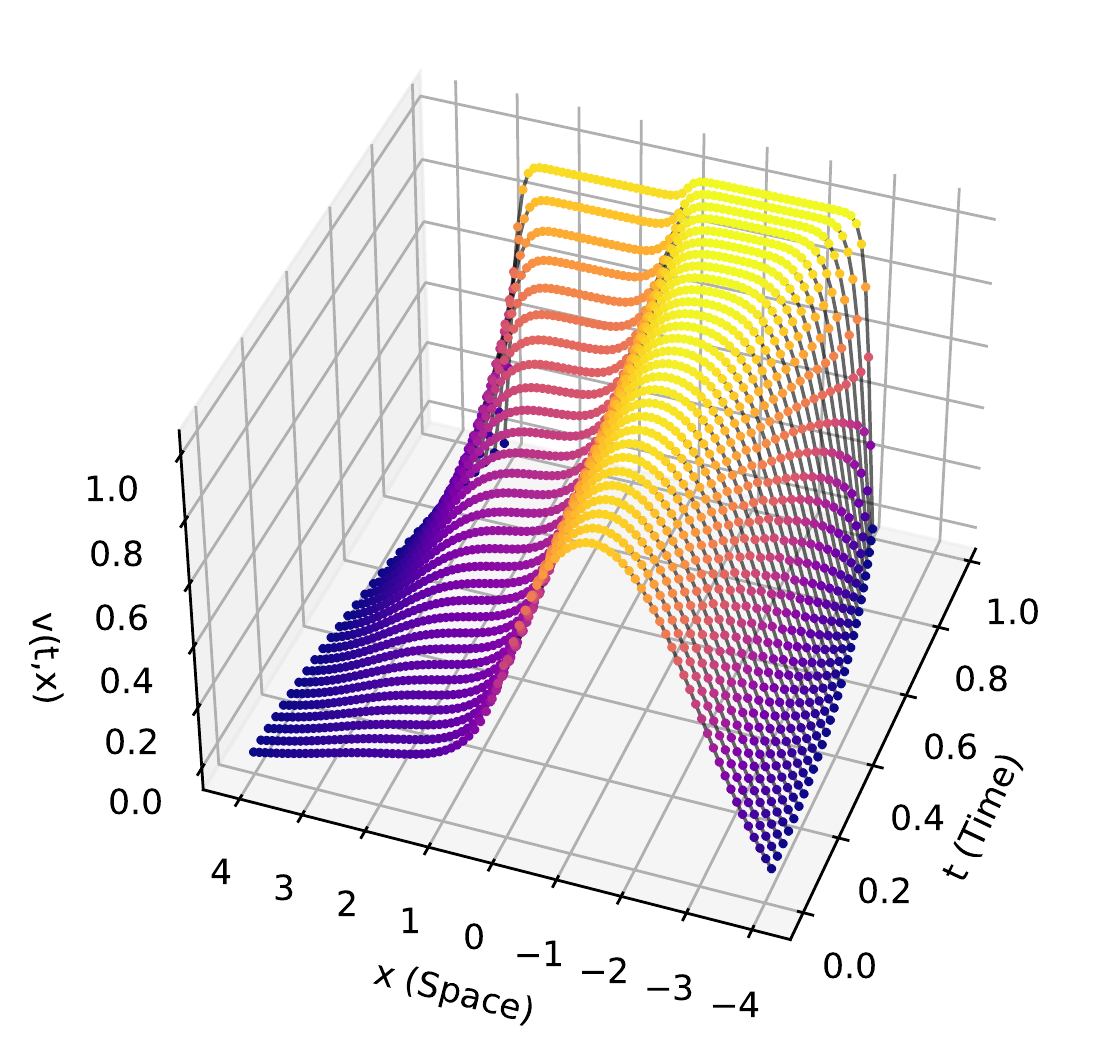}\quad
	\includegraphics[scale=0.5]{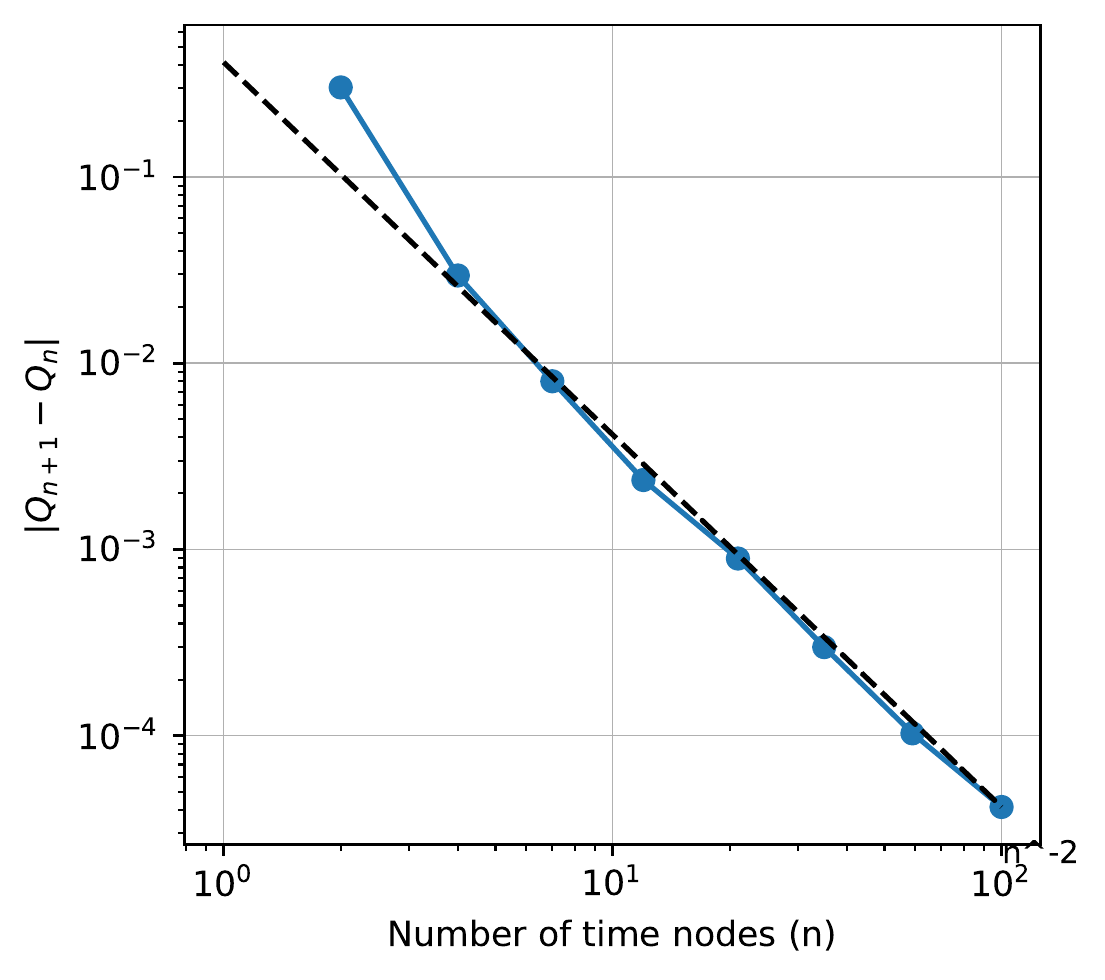}
	\caption{The left pane shows our approximation to $v(t,x)$ from \eqref{defn:v(t,x)_V3} using \eqref{defn:P_n} with modification \eqref{defn:potential_correction_2}. In this example we used parameters $n=30$, $r=1/19$, $\kappa =2$, $\delta =0$, $\mu(t,x) =0$, and $V(x)=\kappa\mathbf{1}\{x >r\}$. The right pane shows a log-log plot of $\lvert Q_{n+1} - Q_n \rvert$, displayed by the blue line. The dashed black straight line has a slope equal to $-2$, which indicates that $Q_n -Q = O(n^{-1})$.}
	\label{ex3}
\end{figure}

To summarise, our modification of the term $e_{n,k}$ in this special case when $V$ is discontinuous amounts to replacing \eqref{defn:potential_correction} with
\begin{equation}\label{defn:potential_correction_2}
	\tilde{e}_{n,k}(x,y) = 1 - \kappa \int_0^t \overline{\Phi}\left(\frac{r - x - (y-x)s/t }{\sqrt{(t-s)s/t}}\right)\,ds.
\end{equation}
In our numerical implementation, we computed the time integral appearing in~\eqref{defn:potential_correction_2} using Gaussian quadratures. To demonstrate the performance of our scheme, we set $r = 1/19$, $\kappa~=~2$ and $g^{\pm}(t) = \pm 4 \mp t^2$. Without replacing $e_{n,k}(x,y)$ with~\eqref{defn:potential_correction_2}, the convergence of $\lvert Q_{n+1} -Q_n\rvert$ to zero is highly non-smooth and slow. With the proposed modification, the scheme appears to converge at rate $O(n^{-1})$, as shown in the right pane of Fig~\ref{ex3}. The left pane in Fig~\ref{ex3} shows our approximation for the values of
\begin{equation} \label{defn:v(t,x)_V3}
	v(t,x) = \mathbf{E}\biggl(e^{- \kappa\int_t^1 \mathbf{1}\{W(s) > r\}\,du }; g^-(s) < W(s) < g^+(s),\,s\in[t,1] \,\bigg|\, W(t)=x\biggr).
\end{equation}

\end{document}